%%%%%%%%%%%%%%%%%%%%%%%%%%%%%%%%%%%%%%%%%%%%%%%%%%%%%%%%%%%%%%%%%%%%%%%%%%%%%%%%%%%%%%%%%%%%%%%%%%%%%
%                                                                                                   %
%                       New Randers metrics defined by the other Randers metrics                    %
%                                                                                                   %
%                                                                                                   %
%                 Azar Fatahi, Masoumeh Hosseini  and Hamid Reza Salimi Moghaddam                   %
%%%%%%%%%%%%%%%%%%%%%%%%%%%%%%%%%%%%%%%%%%%%%%%%%%%%%%%%%%%%%%%%%%%%%%%%%%%%%%%%%%%%%%%%%%%%%%%%%%%%%
%%PACS Numbers:
\documentclass[11pt]{amsart}
\usepackage{amssymb}
\usepackage[margin=1in]{geometry}
\usepackage[colorlinks]{hyperref}
\oddsidemargin=.7in \evensidemargin=.7in

\textwidth=6.2in

\addtolength{\oddsidemargin}{-.70in}
\setlength{\evensidemargin}{\oddsidemargin}

%\addtolength{\topmargin}{-0.5in}

\newtheorem{theorem}{Theorem}[section]

\newtheorem{prop}[theorem]{Proposition}
\newtheorem{cor}[theorem]{Corollary}

\theoremstyle{definition}

\theoremstyle{remark}
\newtheorem{remark}[theorem]{Remark}
\newtheorem{notation}[theorem]{Notation}
\numberwithin{equation}{section}
\begin{document}

\newcommand{\spacing}[1]{\renewcommand{\baselinestretch}{#1}\large\normalsize}
\spacing{1.14}

\title[New Randers metrics defined by the other Randers metrics]{New Randers metrics defined by the other Randers metrics}

\author {Azar Fatahi}

\address{Azar Fatahi\\ Department of Pure Mathematics \\ Faculty of  Mathematics and Statistics\\ University of Isfahan\\ Isfahan\\ 81746-73441-Iran.} \email{azi.f3000@yahoo.com}

\author {Masoumeh Hosseini}

\address{Masoumeh Hosseini\\ Department of Pure Mathematics \\ Faculty of  Mathematics and Statistics\\ University of Isfahan\\ Isfahan\\ 81746-73441-Iran.} \email{hoseini\_masomeh@ymail.com}

\author {Hamid Reza Salimi Moghaddam}

\address{Hamid Reza Salimi Moghaddam\\ Department of Pure Mathematics \\ Faculty of  Mathematics and Statistics\\ University of Isfahan\\ Isfahan\\ 81746-73441-Iran.\\ Scopus Author ID: 26534920800 \\ ORCID Id:0000-0001-6112-4259\\} \email{hr.salimi@sci.ui.ac.ir and salimi.moghaddam@gmail.com}

\keywords{invariant Riemannain metric, invariant Randers metric, Lie group\\
AMS 2020 Mathematics Subject Classification: 53C30, 53C60, 53C25, 22E60.}

\date{\today}

\begin{abstract}
In this short article, using a left-invariant Randers metric $F$, we define a new left-invariant Randers metric $\tilde{F}$. We show that $F$ is of Berwald (Douglas) type if and only if $\tilde{F}$ is of Berwald (Douglas) type. In the case of Berwaldian metrics, we give the relation between their flag curvatures. Also, we have studied the relations between their base Riemannian metrics. Finally, as examples, the results are studied in the Heisenberg group and almost Abelian Lie groups.
\end{abstract}

\maketitle
%%---------------------------INTRODUCTION--------------------------

\section{\textbf{Introduction}}
$\left(\alpha,\beta\right)$-metrics are a type of Finsler metrics that are known for their simplicity and applications (see \cite{Antonelli-Ingarden-Matsumoto}, \cite{Asanov}, and \cite{Chern-Shen}).
Matsumoto defined these Finsler metrics in \cite{Matsumoto}, but before Matsumoto introduced $(\alpha,\beta)$-metric, in 1941, the first $(\alpha,\beta)$-metric was introduced by G.Randers due
to its application in physics, especially in general relativity, which was called the Randers metric(see \cite{Randers}).\\
Similar to Riemannian metrics, the study of left-invariant Finsler metrics on Lie groups holds a special significance among Finsler metrics.
On the other hand, as we have mentioned, $(\alpha,\beta)$-metrics are important metrics among Finsler metrics.  So the study of left-invariant $(\alpha,\beta)$-metrics
on Lie groups is a very interesting field in Finsler geometry (for example see \cite{Deng-Hosseini-Liu-Salimi} and \cite{salimi}). \\
In this article, we start with a left-invariant Randers metric $F$ and define a new left-invariant Randers metric $\tilde{F}$. First, let us consider the general case on an arbitrary smooth manifold. Suppose that ${\textsf{h}}$ is a Riemannian metric on a smooth manifold $M$ and $X$ is a nowhere zero vector field on $M$ (if there exists such a vector field) with the assumption $\sqrt{{\textsf{h}}(X, X)}<1$, then $F(x,y)=\sqrt{{\textsf{h}}(y,y)}+{\textsf{h}}(X(x),y)$ is a Randers metric on $M$. For any $y\in T_xM\setminus\{0\}$ the fundamental tensor $g_y$ is an inner product on $T_xM$. Hence, we can consider $g_X$ as a Riemannian metric on $M$. According to \cite{Parhizkar-salimi}, for any $v, z\in T_xM$, we have:
\begin{equation}\label{7}
	g_X(v,z)=\Big{(}{\textsf{h}}(v,z)+\frac{1}{\|X(x)\|}{\textsf{h}}(X(x),v){\textsf{h}}(X(x),z)\Big{)}(1+\|X(x)\|),
\end{equation}
where the norm $\|X(x)\|$ is computed by the Riemannian metric ${\textsf{h}}$.\\
Suppose that for any $x\in M$ we have $\sqrt{g_X(X(x),X(x))}<1$. Now, we define a new Randers metric $\tilde{F}$ on $M$ as follows:
\begin{equation}\label{new metric}
	\tilde{F}(x,y)=\sqrt{{g_{X}}(y,y)}+{g_{X}}(X(x),y).
\end{equation}
We mention that one may use $g_y$ and $X$,  $g_X$ and another vector field $Z$, or $g_y$ and $Z$ to define different Randers metrics or other $(\alpha,\beta)$-metrics. However, we have considered $g_X$ and the vector field $X$ for simplicity. Using the same way one may define a sequence of new metrics.\\
If $G$ is a Lie group equipped with a left-invariant Riemannian metric ${\textsf{h}}$, and $X$ is an arbitrary non-zero left-invariant vector field on $G$ with the condition $\|X\|<1$, then easily we see that $F$, which is defined as above, is a left-invariant Randers metric on $G$. Now, using \eqref{7}, we see that the Riemannian metric $g_X$ is a left-invariant Riemannian metric on $G$. On the other hand, if $g_{X}(X,X)<1$, (for example it happens if $\sqrt{{\textsf{h}}(X,X)}<\frac{1}{2}$) then the Finsler metric $\tilde{F}$ defined by \eqref{new metric} is a left-invariant Randers metric on $G$.\\
In this paper, we study the geometry of the left-invariant Riemannian metric $g_X$ and the left-invariant Randers metric $\tilde{F}$ and their relations with the metrics ${\textsf{h}}$ and $F$. More precisely,
in the next section, we focus on the Riemannian metrics $g_X$ and ${\textsf{h}}$. We compute the Levi-Civita connection and the sectional curvature of $g_X$.
Also, we give a necessary and sufficient condition for a geodesic (Killing) vector field of ${\textsf{h}}$ to be a geodesic (Killing) vector field with respect to $g_X$.
In section $3$, the left-invariant Randers metric $\tilde{F}$ which is defined by \eqref{new metric}, is considered, and it is shown that $\tilde{F}$ is of Douglas (Berwald) type if and only if $F$ is of Douglas (Berwald) type.
In the last section, we give some examples of such left-invariant Randers metrics and compute their curvatures on the Lie groups which J. Milnor studied in \cite{Milnor} (almost Abelian Lie groups), and the Heisenberg group (see \cite{salimi2}).
%%-----------------------------The geometry of the Riemannian metric $g_X$-----------------------------

\section{\textbf{The geometry of the Riemannian metric $g_X$}}
In this section, we have considered the left-invariant Riemannian metric $g_X$ defined by \eqref{7}, where ${\textsf{h}}$ is a left-invariant Riemannian metric and $X\in\mathfrak{g}$ is a left-invariant vector field on a Lie group $G$ and $\mathfrak{g}$ denotes the Lie algebra of $G$. We will denote the inner products defined by $g_X$ and ${\textsf{h}}$ on $\mathfrak{g}$ with $g_X$ and $\langle , \rangle$, respectively. We know that there exists a linear mapping $\phi:\mathfrak{g} \to \mathfrak{g}$ such that for any $v,z\in\mathfrak{g}$ we have:
\begin{equation}\label{1}
g_X(v,z)=\langle v,\phi(z)\rangle.
\end{equation}
Now, using equations \eqref{7} and \eqref{1}, easily we can write:
\begin{equation}\label{32}
\phi(z)=(1+\|X\|)z+\frac{1+\|X\|}{\|X\|}\langle X,z\rangle X.
\end{equation}
Since $\phi$ is a symmetric mapping, there exists an orthonormal basis concerning the inner product
$\langle , \rangle$,  of the Lie algebra $\mathfrak{g}$ consisted of eigenvectors of $\phi$.\\
Suppose that $\{X_1,\cdots,X_n\}$ is the above basis consisted of eigenvectors and for $i=1,\cdots,n$, $\lambda_i$ is the eigenvalue corresponded to the eigenvector $X_i$, i.e. $\phi(X_i)=\lambda_iX_i $.

%%%%%%%%%%%%%%%%%%%%%%%%%%%%%%%%%%%%%%%%%%%%%%%%
%%%%%%%%%%%%%%%%%%%%%%%%%%%%%%%%%%%%%%%%%%%%%%%%
%%%%%%%%%%%%%%%%%%%%%%%%%%%%%%%%%%%%%%%%%%%%%%%%
We know that $X=\sum_{j=1}^n\langle X, X_j\rangle X_j$, so for any $i=1\cdots n$ we have:
\begin{equation}\label{new eq}
    \lambda_iX_i=(1+\|X\|)X_i+\frac{1+\|X\|}{\|X\|}\langle X,X_i\rangle \sum_{j=1}^n\langle X,X_j\rangle X_j.
\end{equation}
%%%%%%%%%%%%%%%%%%%%%%%%%%%%%%%%%%%%%%%%%%%%%%%%
%%%%%%%%%%%%%%%%%%%%%%%%%%%%%%%%%%%%%%%%%%%%%%%%
%%%%%%%%%%%%%%%%%%%%%%%%%%%%%%%%%%%%%%%%%%%%%%%%
Easily, for any $1\leq i , j \leq n$, we have:
\begin{equation*}
\lambda_i=(1+\|X\|)(1+\frac{\langle X,X_i\rangle^2}{\|X\|}),
\end{equation*}
and
\begin{equation*}
g_X(X_i,X_j)=\lambda_j \delta_{ij}.
\end{equation*}
%%%%%%%%%%%%%%%%%%%%%%%%%%%%%%%%%%%%%%%%%%%%%%%%
%%%%%%%%%%%%%%%%%%%%%%%%%%%%%%%%%%%%%%%%%%%%%%%%
%%%%%%%%%%%%%%%%%%%%%%%%%%%%%%%%%%%%%%%%%%%%%%%%
On the other hand the equation \eqref{new eq} shows that if for an $i_0$ ($1\leq i_0 \leq n$) we have
$\langle X,X_{i_0}\rangle\neq 0$, then for any $i\neq i_0$ we have $\langle X,X_i\rangle= 0$, which shows that
$X_{i_0}=\frac{X}{\|X\|}$. So we have $\lambda_{i_0}=(1+\|X\|)^2$ and $\lambda_i=1+\|X\|$, where $i\neq i_0$.
\begin{notation}
From now on the set $\{X_1,X_2,\cdots,X_n\}$ denotes the orthonormal basis relative to the inner product $\langle , \rangle$ for $\mathfrak{g}$ consisting of eigenvectors of $\phi$ and $\lambda_1,\lambda_2,\cdots,\lambda_n$ are their  corresponding eigenvalues.
\end{notation}
%%%%%%%%%%%%%%%%%%%%%%%%%%%%%%%%%%%%%%%%%%%%%%%%
%%%%%%%%%%%%%%%%%%%%%%%%%%%%%%%%%%%%%%%%%%%%%%%%
%%%%%%%%%%%%%%%%%%%%%%%%%%%%%%%%%%%%%%%%%%%%%%%%
\begin{prop}\label{29}
Let $(G,{\textsf{h}})$ be a Lie group with a left-invariant Riemannian metric ${\textsf{h}}$. Suppose that $X$ is a left-invariant vector field on $G$ such that $\|X\|<1$, where the norm is computed with respect to ${\textsf{h}}$. Then the Levi-Civita connection of the Riemannian metric $g_X$, defined by the equation \eqref{7} is obtained as follows:
\begin{align*}
\nabla_{X_i}{X_j}&=\sum_{k=1}^{n}\frac{1}{2}\Big{(}\frac{\lambda_i}{\lambda_k}\alpha_{kji}-\frac{\lambda_j}{\lambda_k}\alpha_{ikj}-\alpha_{jik}\Big{)}X_k\\
&=\sum_{k=1}^{n}\frac{1}{2}\Big{(}\frac{\|X\|+\langle X,X_i\rangle^2}{\|X\|+\langle X,X_k\rangle^2}\alpha_{kji}-\frac{\|X\|+\langle X,X_j\rangle^2}{\|X\|+\langle X,X_k\rangle^2}\alpha_{ikj}-\alpha_{jik}\Big{)}X_k,
\end{align*}
where $\alpha_{ijk}$ are structural coefficients with respect to the basis $\{X_1,X_2,\cdots,X_n\}$.
\end{prop}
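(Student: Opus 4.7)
The plan is to apply the Koszul formula in the orthogonal basis $\{X_1,\dots,X_n\}$ and exploit that all the relevant vector fields are left-invariant. Recall that the Koszul formula for any metric $g$ is
\begin{equation*}
2g(\nabla_{X}Y,Z)=Xg(Y,Z)+Yg(Z,X)-Zg(X,Y)-g(X,[Y,Z])+g(Y,[Z,X])+g(Z,[X,Y]).
\end{equation*}
When $X,Y,Z$ are all left-invariant and $g$ is left-invariant, the three directional-derivative terms vanish because $g(Y,Z)$ etc. are constants on $G$. Applying this to $g_X$ and the triple $(X_i,X_j,X_k)$, we are left with
\begin{equation*}
2g_X(\nabla_{X_i}X_j,X_k)=-g_X(X_i,[X_j,X_k])+g_X(X_j,[X_k,X_i])+g_X(X_k,[X_i,X_j]).
\end{equation*}

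Next I would use the computations preceding the proposition: $\{X_1,\dots,X_n\}$ is $g_X$-orthogonal with $g_X(X_i,X_j)=\lambda_j\delta_{ij}$. Writing $[X_p,X_q]=\sum_{r}\alpha_{pqr}X_r$, each term on the right collapses to a single structural coefficient scaled by an eigenvalue; for example $g_X(X_i,[X_j,X_k])=\lambda_i\alpha_{jki}$. Expanding $\nabla_{X_i}X_j=\sum_{k}c_{ij}^{k}X_k$ and pairing with $X_k$ using $g_X(X_k,X_k)=\lambda_k$, one solves
\begin{equation*}
c_{ij}^{k}=\frac{1}{2\lambda_k}\bigl(-\lambda_i\alpha_{jki}+\lambda_j\alpha_{kij}+\lambda_k\alpha_{ijk}\bigr).
\end{equation*}
The skew-symmetry $\alpha_{pqr}=-\alpha_{qpr}$ of the structural coefficients then converts this into the form $\tfrac{1}{2}\bigl(\tfrac{\lambda_i}{\lambda_k}\alpha_{kji}-\tfrac{\lambda_j}{\lambda_k}\alpha_{ikj}-\alpha_{jik}\bigr)$ stated in the proposition.

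For the second expression, I would substitute the explicit eigenvalues $\lambda_i=(1+\|X\|)\bigl(1+\tfrac{\langle X,X_i\rangle^{2}}{\|X\|}\bigr)$ computed just before the proposition. The factor $1+\|X\|$ cancels in every ratio $\lambda_i/\lambda_k$, and clearing the remaining $\|X\|$ in the denominator yields exactly $\tfrac{\|X\|+\langle X,X_i\rangle^{2}}{\|X\|+\langle X,X_k\rangle^{2}}$, matching the second line of the statement.

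There is no real obstacle here beyond careful index-bookkeeping; the substantive point is the vanishing of the derivative terms in the Koszul formula thanks to left-invariance, which reduces the entire computation to a purely algebraic identity in $\mathfrak{g}$. The only mild subtlety is tracking the signs when converting between the raw Koszul output and the symmetrized form given in the proposition, which is handled by the antisymmetry of $\alpha_{pqr}$ in its first two indices.
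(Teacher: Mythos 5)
Your proposal is correct and follows essentially the same route as the paper: the Koszul formula reduced by left-invariance, the $g_X$-orthogonality relation $g_X(X_i,X_j)=\lambda_j\delta_{ij}$ to collapse the bracket terms, and substitution of the explicit eigenvalues $\lambda_i=(1+\|X\|)\bigl(1+\tfrac{\langle X,X_i\rangle^2}{\|X\|}\bigr)$ for the second expression. The paper simply starts from the already-reduced Koszul formula rather than spelling out why the derivative terms vanish; otherwise the two arguments coincide.
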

\begin{proof}
Using the Koszul formula, we have:
\begin{equation}
g_X(\nabla_{X_i}X_j,X_k)=\frac{1}{2}\Big{(}g_X(X_i,[X_k,X_j])-g_X(X_j,[X_i,X_k])-g_X(X_k,[X_j,X_i])\Big{)}.
\end{equation}
It follows from $[X_i,X_j]=\sum_{k=1}^{n}\alpha_{ijk}X_k$ and \eqref{1} that
\begin{equation}\label{2}
g_X(\nabla_{X_i}X_j,X_k)=\frac{1}{2}(\lambda_i\alpha_{kji}-\lambda_j\alpha_{ikj}-\lambda_k\alpha_{jik}).
\end{equation}
Now, the fact that,
\begin{equation}\label{3}
\nabla_{X_i}{X_j}=\sum_{k=1}^{n}\langle\nabla_{X_i}{X_j},X_k\rangle X_k=\sum_{k=1}^{n}\frac{1}{\lambda_k}g_X(\nabla_{X_i}X_j,X_k)X_k,
\end{equation}
together with a direct computation, completes the proof.
\end{proof}

\begin{prop}\label{31}
Under the same assumptions as in Proposition \ref{29}, if $K$ is the sectional curvature of Riemannian metric $g_X$, then the formula for $K$ is as follows:
\begin{equation}\label{6}
K(X_i,X_j)=\frac{1}{4} \sum_{l=1}^{n}\Big{(}-\frac{4}{\lambda_l}\alpha_{ljj}\alpha_{lii}-\frac{2}{\lambda_l}\alpha_{lji}\alpha_{ilj}+\frac{2}{\lambda_j}\alpha_{ijl}\alpha_{jli}+\frac{2}{\lambda_i}\alpha_{ilj}\alpha_{jil}-\frac{3\lambda_l}{\lambda_i \lambda_j}\alpha_{ijl}^2+\frac{\lambda_j}{\lambda_i \lambda_l}\alpha_{ilj}^2+\frac{\lambda_i}{\lambda_j \lambda_l}\alpha_{lji}^2\Big{)}.
\end{equation}
\end{prop}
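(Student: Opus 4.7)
The plan is to evaluate the standard sectional curvature expression
$$K(X_i,X_j) \;=\; \frac{g_X(R(X_i,X_j)X_j,\,X_i)}{g_X(X_i,X_i)\,g_X(X_j,X_j) - g_X(X_i,X_j)^2}$$
directly, using the Levi-Civita connection supplied by Proposition \ref{29}. The denominator is immediate: because $\{X_1,\dots,X_n\}$ is a $\langle\,,\,\rangle$-orthonormal basis of $\phi$-eigenvectors, one has $g_X(X_i,X_j)=\lambda_j\delta_{ij}$, so the denominator collapses to $\lambda_i\lambda_j$. Everything else is about the numerator.

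For the numerator I would expand $R(X_i,X_j)X_j = \nabla_{X_i}\nabla_{X_j}X_j - \nabla_{X_j}\nabla_{X_i}X_j - \nabla_{[X_i,X_j]}X_j$. Rather than substitute the fully expanded Levi-Civita formula, it is more efficient to work with the intermediate identity \eqref{2},
$$g_X(\nabla_{X_a}X_b,X_c) = \tfrac{1}{2}\bigl(\lambda_a\alpha_{cba}-\lambda_b\alpha_{acb}-\lambda_c\alpha_{bac}\bigr),$$
together with the decomposition $\nabla_{X_a}X_b = \sum_k \lambda_k^{-1}\,g_X(\nabla_{X_a}X_b,X_k)\,X_k$ taken from \eqref{3}. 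Inserting a dummy index $l$ for the intermediate contraction gives
\begin{align*}
g_X(\nabla_{X_i}\nabla_{X_j}X_j,\,X_i) &= \sum_{l} \lambda_l^{-1}\,g_X(\nabla_{X_j}X_j,X_l)\,g_X(\nabla_{X_i}X_l,X_i),\\
g_X(\nabla_{X_j}\nabla_{X_i}X_j,\,X_i) &= \sum_{l} \lambda_l^{-1}\,g_X(\nabla_{X_i}X_j,X_l)\,g_X(\nabla_{X_j}X_l,X_i),\\
g_X(\nabla_{[X_i,X_j]}X_j,\,X_i) &= \sum_{l} \alpha_{ijl}\,g_X(\nabla_{X_l}X_j,X_i),
\end{align*}
and each remaining inner product $g_X(\nabla_\bullet\bullet,\bullet)$ is then replaced by \eqref{2}, turning every summand into three structural-constant monomials weighted by appropriate eigenvalues.

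The last step is to assemble these expansions, divide by $\lambda_i\lambda_j$, and match the seven monomials on the right of \eqref{6}. The key simplifications are the antisymmetry $\alpha_{abc}=-\alpha_{bac}$, and in particular $\alpha_{aab}=0$, which kills several raw terms (e.g.\ anything of the form $\lambda_i\alpha_{iba}\cdots$ vanishes), and the Jacobi-independent identity $\alpha_{jli}=-\alpha_{lji}$, which lets pairs of distinct-looking products combine after a relabeling of the summation variable $l$.

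\textbf{Main obstacle.} The individual algebraic manipulations are elementary; the real difficulty is pure bookkeeping. Expanding the three contributions produces on the order of twenty raw terms, most of which have nearly identical index patterns, and one must carefully track signs and permutations of $(i,j,l)$ through the expansion, kill the terms that vanish by antisymmetry, and combine the rest in the correct way to land exactly on the seven-term expression with the precise coefficients $-4/\lambda_l,\,-2/\lambda_l,\,2/\lambda_j,\,2/\lambda_i,\,-3\lambda_l/(\lambda_i\lambda_j),\,\lambda_j/(\lambda_i\lambda_l),\,\lambda_i/(\lambda_j\lambda_l)$ displayed in \eqref{6}.
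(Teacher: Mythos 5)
Your proposal is correct and follows essentially the same route as the paper: both expand $R(X_i,X_j)X_j$ through the Levi-Civita connection coming from the Koszul values \eqref{2} and then contract with $X_i$, your normalization $g_X(R(X_i,X_j)X_j,X_i)/(\lambda_i\lambda_j)$ agreeing with the paper's $\tfrac{1}{\lambda_j}\langle R(X_i,X_j)X_j,X_i\rangle$ because $g_X(\cdot,X_i)=\lambda_i\langle\cdot,X_i\rangle$. Carrying out your bookkeeping does land exactly on the seven terms of \eqref{6}; the only slip is the parenthetical claim that anything of the form $\lambda_i\alpha_{iba}$ vanishes --- only the repeated-first-pair constants $\alpha_{aab}$ are zero, while the genuinely useful cancellations come from $\alpha_{abc}=-\alpha_{bac}$ exactly as you say.
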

\begin{proof}
Easily, the sectional curvature of Riemannian metric $g_X$ is as follows:
\begin{equation}\label{5}
K(X_i,X_j)= \frac{1}{\lambda_j}\langle R(X_i,X_j)X_j,X_i\rangle.
\end{equation}
On the other hand, for the curvature tensor of the Riemannian metric we have:
\begin{equation}
	R(X_i,X_j)X_k=\nabla_{X_i}\nabla_{X_j}X_k-\nabla_{X_j}\nabla_{X_i}X_k-\nabla_{[X_i,X_j]}X_k.
\end{equation}
Now, a direct computation gives:
\begin{equation}	 \nabla_{X_i}\nabla_{X_j}X_k=\frac{1}{4}\sum_{l,h=1}^{n}(\frac{\lambda_j}{\lambda_l}\alpha_{lkj}-\frac{\lambda_k}{\lambda_l}\alpha_{jlk}-\alpha_{kjl})(\frac{\lambda_i}{\lambda_h}\alpha_{hli}-\frac{\lambda_l}{\lambda_h}\alpha_{ihl}-\alpha_{lih})X_h.
\end{equation}

\begin{equation}
\nabla_{X_j}\nabla_{X_i}X_k=\frac{1}{4}\sum_{l,h=1}^{n}(\frac{\lambda_i}{\lambda_l}\alpha_{lki}-\frac{\lambda_k}{\lambda_l}\alpha_{ilk}-\alpha_{kil})(\frac{\lambda_j}{\lambda_h}\alpha_{hlj}-\frac{\lambda_l}{\lambda_h}\alpha_{jhl}-\alpha_{ljh})X_h.
\end{equation}

Also due to the equation $[X_i,X_j]=\sum_{k=1}^{n}\alpha_{ijk}X_k$, we have:
\begin{equation}
	\nabla_{[X_i,X_j]}X_k=\frac{1}{2}\sum_{l,h=1}^{n} \alpha_{ijl}(\frac{\lambda_l}{\lambda_h}\alpha_{hkl}-\frac{\lambda_k}{\lambda_h}\alpha_{lhk}-\alpha_{klh})X_h.
\end{equation}
Hence we obtain
\begin{align}\label{4}
R(X_i,X_j)X_k&=\frac{1}{4}\sum_{l,h=1}^{n}\{(\frac{\lambda_j}{\lambda_l}\alpha_{lkj}-\frac{\lambda_k}{\lambda_l}\alpha_{jlk}-\alpha_{kjl})(\frac{\lambda_i}{\lambda_h}\alpha_{hli}-\frac{\lambda_l}{\lambda_h}\alpha_{ihl}-\alpha_{lih}) \nonumber\\
	 &-(\frac{\lambda_i}{\lambda_l}\alpha_{lki}-\frac{\lambda_k}{\lambda_l}\alpha_{ilk}-\alpha_{kil})(\frac{\lambda_j}{\lambda_h}\alpha_{hlj}-\frac{\lambda_l}{\lambda_h}\alpha_{jhl}-\alpha_{ljh})\nonumber\\
	 &-2\alpha_{ijl}(\frac{\lambda_l}{\lambda_h}\alpha_{hkl}-\frac{\lambda_k}{\lambda_h}\alpha_{lhk}-\alpha_{klh})\}X_h.
\end{align}
So the equations \eqref{5} and \eqref{4}, complete the proof.
\end{proof}
\begin{prop}
Under the same assumptions as in Proposition \ref{29} we have:
\begin{enumerate}
  \item If $v$ is a geodesic vector field with respect to the Riemannian metric ${\textsf{h}}$, then $v$ is a geodesic vector field with respect to the Riemannian metric $g_X$, if and only if
	\begin{equation}\label{8}
		ad_{v}^*X=0\quad or\quad \langle X,v\rangle=0.
	\end{equation}
  \item If $v$ is a Killing vector field with respect to the Riemannian metric ${\textsf{h}}$, then $v$ is a Killing vector field with respect to the Riemannian metric $g_X$ if and only if
	\begin{equation}\label{9}
		ad_{v}^*X=0,
	\end{equation}
\end{enumerate}
where $ad^*_v$ denotes the transpose of $ad_v$ with respect to $\langle , \rangle$.
\end{prop}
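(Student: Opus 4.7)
The plan is to characterize both conditions in terms of the bilinear form $\langle v, \phi(w)\rangle$ and exploit the simple structure of $\phi$ given in \eqref{32}. Recall that, for a left-invariant metric on a Lie group, a left-invariant field $v$ is geodesic iff $\langle[v,w],v\rangle=0$ for all $w\in\mathfrak g$, and Killing iff $ad_v$ is skew-symmetric, i.e.\ $\langle[v,w],u\rangle+\langle w,[v,u]\rangle=0$ for all $u,w\in\mathfrak g$. The same characterizations hold with $g_X$ replacing $\langle , \rangle$.

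For part (1), I would compute
\[
g_X([v,w],v)=\langle[v,w],\phi(v)\rangle=(1+\|X\|)\langle[v,w],v\rangle+\frac{1+\|X\|}{\|X\|}\langle X,v\rangle\langle[v,w],X\rangle.
\]
Since $v$ is ${\textsf{h}}$-geodesic the first summand vanishes, so $v$ is $g_X$-geodesic iff $\langle X,v\rangle\langle[v,w],X\rangle=0$ for all $w$. Rewriting $\langle[v,w],X\rangle=\langle w,ad_v^*X\rangle$, the condition becomes $\langle X,v\rangle\langle w,ad_v^*X\rangle=0$ for all $w$, which is exactly $\langle X,v\rangle=0$ or $ad_v^*X=0$.

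For part (2), I would plug $\phi$ into $g_X([v,w],u)+g_X(w,[v,u])$. The $(1+\|X\|)$-terms combine to $(1+\|X\|)\bigl(\langle[v,w],u\rangle+\langle w,[v,u]\rangle\bigr)$ and vanish by the ${\textsf{h}}$-Killing hypothesis, leaving
\[
\frac{1+\|X\|}{\|X\|}\Bigl(\langle X,[v,w]\rangle\langle X,u\rangle+\langle X,w\rangle\langle X,[v,u]\rangle\Bigr)=0.
\]
Setting $A:=ad_v^*X$, this reads $\langle A,w\rangle\langle X,u\rangle+\langle X,w\rangle\langle A,u\rangle=0$ for all $u,w$, i.e.\ the symmetrized tensor $A\otimes X+X\otimes A=0$.

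The one step that needs a small argument is deducing $A=0$ from this last symmetrized-outer-product identity (recall $X\neq0$ by hypothesis). Taking $w=u$ gives $\langle A,u\rangle\langle X,u\rangle=0$ for every $u$; if $A\neq0$ choose $u=A+tX$ with $t$ generic so that neither factor vanishes, yielding a contradiction. Hence $A=ad_v^*X=0$. Conversely, both directions are immediate from the displayed formulas, finishing the equivalence. This last polarization-style argument is the only non-routine point; everything else is a direct substitution using \eqref{32}.
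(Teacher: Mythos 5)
Your proposal is correct and follows essentially the same route as the paper: both reduce the $g_X$-conditions via the explicit form of $\phi$ (equivalently \eqref{7}) to $\langle X,v\rangle\langle X,[v,w]\rangle=0$ in part (1) and to the symmetrized identity \eqref{10} in part (2). The only difference is that you spell out the final step of part (2) — deducing $ad_v^*X=0$ from $\langle A,u\rangle\langle X,u\rangle=0$ for all $u$ — which the paper dispatches with the brief remark ``put $z=y$''; your polarization argument there is valid and fills in that detail.
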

\begin{proof}
For (1), we recall that, using \cite{Latifi}, $v$ is a geodesic vector field with respect to $g_X$, if and only if:
	\begin{equation}
		g_X(v,[v,y])=0 \quad \forall y\in \mathfrak{g}.
	\end{equation}
Now, according to \eqref{7} and because $v$ is the geodesic vector field concerning the Riemannian metric $\langle , \rangle$, we have:
	 \begin{equation*}
	 	\langle X,v\rangle \langle X,[v,y]\rangle=0,
	 \end{equation*}
which completes the proof.\\
For (2), we mention that $v$ is a Killing vector field concerning $g_X$ if and only if:
\begin{equation}
	g_X([v,y],z)+g_X(y,[v,z])=0 \quad \forall y,z \in \mathfrak{g}.
\end{equation}
Now, the equation \eqref{7}, together with the fact that $v$ is the Killing vector field for the Riemannian metric ${\textsf{h}}$, shows that the above equation is equivalent to the following equation:
\begin{equation}\label{10}
	\langle X,z\rangle \langle X,[v,y]\rangle+\langle X,y\rangle \langle X,[v,z] \rangle=0 \qquad \forall y,z.
\end{equation}
Now, if we put $z=y$, then the result is concluded.
\end{proof}

%%-----------------------------The geometry of the Randers metric defined by $g_X$ and $X$-----------------------------

\section{\textbf{The geometry of the Randers metric defined by $g_X$ and $X$}}
In this section, using a Randers metric $F$, we define a new Randers metric $\tilde{F}$ and study some of their geometric relations.
Suppose that $(M, F)$ is a Randers space, where $F$ is defined by a Riemannian metric ${\textsf{h}}$ and a vector field $X$. Let $g_y$ be the fundamental tensor of the Randers metric $F$. As mentioned in the previous section, $g_X$ is a Riemannian metric on $M$. Suppose that $\sqrt{g_X(X, X)}<1$ (we recall that, for example, this condition holds if $\sqrt{{\textsf{h}}(X, X)}<\frac{1}{2}$). We can easily see the equation \ref{new metric} defines a Randers metric on $M$. A direct computation shows that
\begin{equation}\label{New metric simple formula}	 \tilde{F}(x,y)=(\|y\|^2+\frac{{\textsf{h}}(X(x),y)^2}{\|X\|})^{\frac{1}{2}}(1+\|X\|)^\frac{1}{2}
+{\textsf{h}}(X(x),y)(1+\|X\|)^2,
\end{equation}
where the norm $\| \|$ is computed by ${\textsf{h}}$.\\
We note that, during this article, when we use the Randers metric $\tilde{F}$, we have considered
$\sqrt{g_X(X, X)}<1$.

\begin{prop}\label{11}
Let $G$ be a Lie group with a left-invariant Randers metric $F$ arising from a left-invariant Riemannian metric ${\textsf{h}}$ and a left-invariant vector field $X$.
\begin{enumerate}
  \item $F$ is of Douglas type if and only if $\tilde{F}$ is of Douglas type.
  \item $F$ is of Berwald type if and only if $\tilde{F}$ is of Berwald type.
\end{enumerate}
\end{prop}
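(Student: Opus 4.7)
The plan is to invoke the classical characterizations of Berwald and Douglas type for a Randers metric $\alpha+\beta$: the metric is Berwald iff $\beta$ is parallel for the Levi-Civita connection of $\alpha$, and it is Douglas iff $\beta$ is closed. Thus $F$ is Berwald (Douglas) iff $\beta={\textsf{h}}(X,\cdot)$ satisfies $\nabla^{{\textsf{h}}}\beta=0$ (resp.\ $d\beta=0$), while $\tilde F$ is Berwald (Douglas) iff $\tilde\beta=g_X(X,\cdot)$ satisfies $\nabla^{g_X}\tilde\beta=0$ (resp.\ $d\tilde\beta=0$). The key reduction is obtained by setting $v=X$ in \eqref{7}, which gives
\begin{equation*}
\tilde\beta(Y)=g_X(X,Y)=(1+\|X\|)^{2}\,{\textsf{h}}(X,Y)=(1+\|X\|)^{2}\beta(Y);
\end{equation*}
since $X$ and ${\textsf{h}}$ are left-invariant, $\|X\|$ is a positive constant on $G$, so $\tilde\beta$ and $\beta$ differ only by a constant factor.

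The Douglas half follows at once: $d\tilde\beta=(1+\|X\|)^{2}\,d\beta$, so $d\tilde\beta=0$ if and only if $d\beta=0$.

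For the Berwald half, I would show that whichever parallelism hypothesis holds forces the two Levi-Civita connections to coincide. Rewrite \eqref{7} globally as
\begin{equation*}
g_X=(1+\|X\|)\,{\textsf{h}}+\tfrac{1+\|X\|}{\|X\|}\,\beta\otimes\beta,
\end{equation*}
with constant scalar coefficients. If $\nabla^{{\textsf{h}}}X=0$, then $\nabla^{{\textsf{h}}}\beta=0$, so the right-hand side is $\nabla^{{\textsf{h}}}$-parallel; hence $\nabla^{{\textsf{h}}}g_X=0$, and the uniqueness of the torsion-free metric connection forces $\nabla^{g_X}=\nabla^{{\textsf{h}}}$, so $\nabla^{g_X}X=0$, equivalently $\nabla^{g_X}\tilde\beta=0$. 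For the converse, I would invert the identity to
\begin{equation*}
{\textsf{h}}=\tfrac{1}{1+\|X\|}\,g_X-\tfrac{1}{\|X\|}\,\beta\otimes\beta,
\end{equation*}
note that $\nabla^{g_X}\tilde\beta=0$ implies $\nabla^{g_X}\beta=0$ by the same constant-scalar argument, and conclude $\nabla^{g_X}{\textsf{h}}=0$; hence $\nabla^{{\textsf{h}}}=\nabla^{g_X}$ and $\nabla^{{\textsf{h}}}X=0$.

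The only step demanding care is the observation that $\|X\|$ is globally constant on $G$, which uses both the left-invariance of $X$ and of ${\textsf{h}}$; this is what allows the scalar factors to pass through the connections and the exterior derivative. A purely computational alternative would substitute $X=\|X\|X_{i_{0}}$ into the formula of Proposition \ref{29} and check $\nabla^{g_X}_{X_i}X=0\iff\nabla^{{\textsf{h}}}_{X_i}X=0$ term-by-term, but the structural argument above avoids all index manipulation.
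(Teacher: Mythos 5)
Your proof is correct, and for the Berwald half it takes a genuinely different route from the paper. For the Douglas half the two arguments are essentially the same in substance: your identity $g_X(X,\cdot)=(1+\|X\|)^2\,{\textsf{h}}(X,\cdot)$ (valid because $\|X\|$ is constant by left-invariance) is exactly the computation the paper performs via $\phi$ to show $\langle X,[\mathfrak g,\mathfrak g]\rangle=0\iff g_X(X,[\mathfrak g,\mathfrak g])=0$, which for left-invariant one-forms is the same statement as $d\beta=0\iff d\tilde\beta=0$. For the Berwald half, the paper invokes the Lie-algebraic characterization of Proposition 4.1 of \cite{Deng-Hosseini-Liu-Salimi} (the two bracket identities) and verifies directly that the second identity for $\langle\,,\rangle$ is equivalent to the corresponding identity for $g_X$, picking up an overall factor $(1+\|X\|)$. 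You instead use the classical Kikuchi criterion ($\beta$ parallel for the Levi-Civita connection of $\alpha$) together with the decomposition $g_X=(1+\|X\|)\,{\textsf{h}}+\frac{1+\|X\|}{\|X\|}\,\beta\otimes\beta$ with constant coefficients and the uniqueness of the torsion-free metric connection. This is a cleaner, coordinate-free argument, and it yields as a byproduct that the Levi-Civita connections of ${\textsf{h}}$ and $g_X$ coincide in the Berwald case --- a fact the paper has to state separately as Corollary \ref{23}. The trade-off is that the paper's bracket computation stays entirely inside the Lie-algebra framework used throughout (and reuses its cited criterion verbatim), whereas your argument relies on the general manifold-level characterizations of Berwald and Douglas Randers metrics; both sets of hypotheses are standard and correctly applied here, so there is no gap.
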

\begin{proof}
For (1), we know that the Randers metric $F$ is of Douglas type if and only if  $\langle X,[\mathfrak{g},\mathfrak{g}] \rangle=0$, where similar to the previous section $\langle, \rangle$  is the inner product induced by the Riemannian metric ${\textsf{h}}$ on $\mathfrak{g}$. On the other hand, the equations \eqref{1} and \eqref{32} together with a little computations show that $\langle X, [\mathfrak{g},\mathfrak{g}]\rangle=0$ if and only if $g_X(X,[\mathfrak{g},\mathfrak{g}])=0$, so the proof is completed.\\
For the part (2), by Proposition 4.1 in \cite{Deng-Hosseini-Liu-Salimi}, the Randers metric $F$ is of Berwald type if and only if
\begin{align}
	&\langle X,[y,z] \rangle=0,\\
	&\langle [y,X],z \rangle+\langle [z,X],y \rangle=0.
\end{align}
Consider the basis defined in Proposition \ref{29}. Suppose that $F$ is of Berwald type. So $F$ (and also $\tilde{F}$) is of Douglas type and for any $1\leq i, j \leq n$ we have $\langle [X_i,X],X_j \rangle+\langle [X_j,X],X_i \rangle=0$, which proves that $g_X([X_i,X],X_j)+g_X([X_j,X],X_i)=0$.\\
Conversely, let $\tilde{F}$ be of Berwald type. Therefore, $\tilde{F}$ (and also $F$) is of Douglas type and
\begin{equation}
	g_X([X_i,X],X_j)+g_X([X_j,X],X_i)=(1+\|X\|)(\langle X_j,[X_i,X] \rangle+\langle X_i,[X_j,X] \rangle)=0.
\end{equation}
The last equation completes the proof.
\end{proof}
If $F$ (or equivalently $\tilde{F}$) is of Douglas type, then the formulas of Levi-Civita connections of ${\textsf{h}}$ and $g_X$ are simple. In this case, we have the following proposition.
\begin{prop}\label{proposition nabla}
Suppose that $\nabla$ and $\bar{\nabla}$ denote the Levi-Civita connections of $g_X$ and ${\textsf{h}}$, respectively. Under the same assumptions as in Proposition (\ref{11}), if $F$ is of Douglas type then for $\nabla$ and $\bar{\nabla}$ we have:
\begin{eqnarray}\label{22}
  \nabla_{X_i}X_j&=&\sum_{k=1}^{n}\frac{1+\|X\|}{2\lambda_k}(\alpha_{kji}-\alpha_{ikj}-\alpha_{jik})X_k \nonumber \\
  &=& \bar{\nabla}_{X_i}X_j + \frac{1}{2\|X\|(1+\|X\|)}(\langle [X_i,X],X_j \rangle + \langle [X_j,X],X_i \rangle ) X.
\end{eqnarray}
\end{prop}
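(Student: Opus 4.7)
The plan is to derive both equalities directly from Proposition \ref{29} together with the Douglas criterion, translating the latter into the statement that certain structure constants in the eigenbasis vanish.

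First I would set up the needed facts about the basis. Since the only eigenvector of $\phi$ with non-simple eigenvalue is $X_{i_0}=X/\|X\|$, we have $\lambda_{i_0}=(1+\|X\|)^2$ and $\lambda_k=1+\|X\|$ for all $k\neq i_0$. As observed in the proof of Proposition \ref{11}, the Douglas condition $\langle X,[\mathfrak{g},\mathfrak{g}]\rangle=0$ is equivalent to $\alpha_{ab\,i_0}=0$ for all $a,b$, because $\alpha_{ab\,i_0}=\langle[X_a,X_b],X_{i_0}\rangle=\frac{1}{\|X\|}\langle[X_a,X_b],X\rangle$.

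To obtain the first equality, I would start from the formula in Proposition \ref{29} and compare, summand by summand in $k$, the coefficient
\[
\tfrac{1}{2}\bigl(\tfrac{\lambda_i}{\lambda_k}\alpha_{kji}-\tfrac{\lambda_j}{\lambda_k}\alpha_{ikj}-\alpha_{jik}\bigr)
\]
with the target coefficient $\tfrac{1+\|X\|}{2\lambda_k}(\alpha_{kji}-\alpha_{ikj}-\alpha_{jik})$. The three required identities
\[
(\lambda_i-(1+\|X\|))\alpha_{kji}=0,\qquad (\lambda_j-(1+\|X\|))\alpha_{ikj}=0,\qquad (\lambda_k-(1+\|X\|))\alpha_{jik}=0
\]
each hold trivially when the distinguished index is not $i_0$; when it is, the corresponding structure constant carries $i_0$ in the third slot and hence vanishes by Douglas. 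This gives the first line.

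For the second equality, I would invoke Koszul's formula for $\bar{\nabla}$ in the orthonormal basis $\{X_1,\dots,X_n\}$, which yields $\bar{\nabla}_{X_i}X_j=\sum_k \tfrac{1}{2}(\alpha_{kji}-\alpha_{ikj}-\alpha_{jik})X_k$. Subtracting, only the $k=i_0$ summand survives since $\tfrac{1+\|X\|}{\lambda_k}-1=0$ for $k\neq i_0$ and equals $-\tfrac{\|X\|}{1+\|X\|}$ for $k=i_0$. Using once more that $\alpha_{ji\,i_0}=0$ (Douglas), the surviving term reads
\[
\nabla_{X_i}X_j-\bar{\nabla}_{X_i}X_j=-\tfrac{\|X\|}{2(1+\|X\|)}(\alpha_{i_0 j i}-\alpha_{i\,i_0 j})X_{i_0}.
\]
Rewriting $\alpha_{i_0 j i}=\tfrac{1}{\|X\|}\langle [X,X_j],X_i\rangle$ and $\alpha_{i\,i_0 j}=\tfrac{1}{\|X\|}\langle [X_i,X],X_j\rangle$, together with $X_{i_0}=X/\|X\|$, collapses this to $\tfrac{1}{2\|X\|(1+\|X\|)}\bigl(\langle[X_i,X],X_j\rangle+\langle[X_j,X],X_i\rangle\bigr)X$, which is exactly what is claimed. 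The only delicate point is bookkeeping of the structure-constant index conventions and making sure the Douglas-induced vanishings are invoked in the right slot; no genuine obstacle is expected beyond this.
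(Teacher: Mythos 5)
Your proof is correct. For the first equality you follow essentially the paper's route: the Koszul/eigenbasis formula of Proposition \ref{29} plus the observation that the Douglas condition $\langle X,[\mathfrak{g},\mathfrak{g}]\rangle=0$ forces every structure constant with $i_0$ in the third slot to vanish, so each $\lambda$ appearing in a numerator may be replaced by $1+\|X\|$ (the paper leaves this reduction implicit in the phrase ``together with the equations (2.1) and (3.12)''; you make it explicit, which is a small improvement). For the second equality your route genuinely differs from the paper's. The paper passes through the identity $\phi(\nabla_{X_i}X_j)=(1+\|X\|)\bar{\nabla}_{X_i}X_j$, computes $\langle X,\nabla_{X_i}X_j\rangle$ via the $\mathrm{ad}^*$ expression $\nabla_{X_i}X_j=\frac{1}{2}([X_i,X_j]-\mathrm{ad}^*_{X_i}X_j-\mathrm{ad}^*_{X_j}X_i)$, and then inverts $\phi$ using \eqref{32}. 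You instead subtract the orthonormal-basis expansion of $\bar{\nabla}_{X_i}X_j$ from the first line termwise, note that the coefficient $\frac{1+\|X\|}{\lambda_k}-1$ kills every $k\neq i_0$, and identify the surviving $X_{i_0}$-component directly; your sign and normalization bookkeeping ($\alpha_{i_0ji}=-\frac{1}{\|X\|}\langle[X_j,X],X_i\rangle$, $\alpha_{i\,i_0 j}=\frac{1}{\|X\|}\langle[X_i,X],X_j\rangle$, $X_{i_0}=X/\|X\|$) checks out. Your version is more elementary, avoiding the $\mathrm{ad}^*$ machinery and the inversion of $\phi$ entirely, at the cost of being tied to the eigenbasis; the paper's argument is basis-free in spirit and isolates the invariant quantity $\langle X,\nabla_{X_i}X_j\rangle$, which is reused implicitly in the later curvature computations. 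Either proof is acceptable.
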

\begin{proof}
The Koszul formula
\begin{equation}\label{12}
	g_X(\nabla_{X_i}X_j,X_k)=\frac{1}{2}(g_X(X_i,[X_k,X_j])-g_X(X_j,[X_i,X_k])-g_X(X_k,[X_j,X_i])),
\end{equation}
together with the equations \eqref{1} and \eqref{12} show that
\begin{equation}\label{13}
		g_X(\nabla_{X_i}X_j,X_k)=\frac{1+\|X\|}{2}(\alpha_{kji}-\alpha_{ikj}-\alpha_{jik}).
\end{equation}
As well as
\begin{equation}\label{14}
		g_X(\nabla_{X_i}X_j,X_k)=(1+\|X\|)\langle\bar{\nabla}_{X_i}X_j,X_k\rangle.
\end{equation}
Now the equation
\begin{equation}\label{15}
	\nabla_{X_i}X_j=\sum_{k=1}^{n}\langle\nabla_{X_i}X_j,X_k\rangle X_k
\end{equation}
completes the proof of the first equation.\\
For the second equation, we recall that the Randers metric $\tilde{F}$ is of Douglas type if and only if $g_X(X,[y,z])=0$. Now according to \eqref{14} we have
	\begin{equation}\label{16}
		\phi(\nabla_{X_i}X_j)=(1+\|X\|)\bar{\nabla}_{X_i}X_j.
	\end{equation}
On the other hand, it is known that (see \cite{Arvanitoyeorgos}):
	\begin{equation}\label{18}
		\nabla_{X_i}X_j=\frac{1}{2}([X_i,X_j]-ad^*_{X_i}X_j-ad^*_{X_j}X_i),
	\end{equation}
where the transpose is computed with respect to $g_X$.
Due to the \eqref{7} we have
	\begin{equation}\label{19}
		g_X(X,ad^*_{X_i}X_j)=(1+\|X\|)^2\langle X,ad^*_{X_i}X_j\rangle,
	\end{equation}
and
	\begin{equation}\label{20}
		\langle\phi([X_i,X]),X_j\rangle=(1+\|X\|)\langle [X_i,X],X_j\rangle.
	\end{equation}
Now according to \eqref{18}, \eqref{19}) and \eqref{20} we have:
	\begin{equation}\label{21}
		\langle X,\nabla_{X_i}X_j\rangle=-\frac{1}{2(1+\|X\|)}(\langle [X_i,X],X_j\rangle +\langle [X_j,X],X_i\rangle.
	\end{equation}
On the other hand, we know
	\begin{equation}\label{17}
		\phi(\nabla_{X_i}X_j)=(1+\|X\|)\nabla_{X_i}X_j+\frac{1+\|X\|}{\|X\|}\langle X,\nabla_{X_i}X_j\rangle X.
	\end{equation}
	Now substituting the relations \eqref{21} and \eqref{16} in \eqref{17}, completes the proof.
\end{proof}

\begin{cor}
In the previous proposition, if $X$ belongs to the center of Lie algebra of $G$ or $X$ is a Killing vector field of the Riemannian metric ${\textsf{h}}$, then the Levi-Civita connection of ${\textsf{h}}$ and the Levi-Civita connection of $g_X$ coincide. In this case, the sectional curvature of the Riemannian metric $g_X$ is as follows:
\begin{equation}
	K(X_i,X_j)=\frac{1}{1+\|X\|}\bar{K}(X_i,X_j),
\end{equation}
where $K$ and $\bar{K}$ denote the sectional curvature of $g_X$ and ${\textsf{h}}$, respectively.
\end{cor}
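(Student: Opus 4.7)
The plan is to deduce both assertions from Proposition \ref{proposition nabla}, which expresses $\nabla_{X_i}X_j - \bar\nabla_{X_i}X_j$ as a scalar multiple of $X$ with coefficient proportional to $\langle [X_i,X],X_j\rangle + \langle [X_j,X],X_i\rangle$. Thus coincidence of the two Levi-Civita connections reduces to checking that this coefficient vanishes under either hypothesis.

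The central case is immediate: $[X_i,X]=[X_j,X]=0$ kills each term. For the Killing case, I would use that $X$ is Killing for ${\textsf{h}}$ if and only if $\mathrm{ad}_X$ is skew-symmetric with respect to $\langle,\rangle$. Rewriting the coefficient as $-(\langle [X,X_i],X_j\rangle + \langle X_i,[X,X_j]\rangle)$ displays it as precisely the Killing identity applied to $(X_i,X_j)$, which vanishes. In either case $\nabla=\bar\nabla$, and in particular the Riemann curvature tensors coincide.

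For the curvature formula I would substitute $R=\bar R$ into Proposition \ref{31}. Since $\{X_1,\dots,X_n\}$ is orthonormal with respect to $\langle,\rangle$, this yields $K(X_i,X_j)=\frac{1}{\lambda_j}\langle \bar R(X_i,X_j)X_j,X_i\rangle=\frac{1}{\lambda_j}\bar K(X_i,X_j)$. Whenever both $X_i$ and $X_j$ are orthogonal to $X$ we have $\lambda_j=1+\|X\|$, and the stated formula drops out.

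The main obstacle is the borderline case in which one index equals the distinguished $i_0$ with $X_{i_0}=X/\|X\|$ and $\lambda_{i_0}=(1+\|X\|)^2$ rather than $1+\|X\|$, where a naive substitution gives the wrong prefactor. My resolution is to show that both sides of the asserted equality vanish in this situation. Under the standing Douglas hypothesis of Proposition \ref{proposition nabla} combined with either of our two extra conditions, the Koszul formula for $\bar\nabla$ collapses to $\bar\nabla_Y X=0$ for every $Y$: in the central case this is direct from $[X,\cdot]=0$ and $\langle X,[\mathfrak{g},\mathfrak{g}]\rangle=0$, and in the Killing case it follows by combining the Koszul terms with the skew-symmetry of $\mathrm{ad}_X$. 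Thus $X$ is $\bar\nabla$-parallel, so $\bar R(\cdot,\cdot)X=0$ and $\bar K(X_i,X_{i_0})=0$; consequently both sides of the formula vanish and the identity holds in this case too.
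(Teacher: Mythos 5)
Your argument is correct and follows essentially the same route as the paper: the paper's proof is just ``a direct computation together with \eqref{22}'', i.e.\ it reads the coincidence of the two connections off Proposition \ref{proposition nabla} and then substitutes $R=\bar R$ into the sectional--curvature formula $K(X_i,X_j)=\tfrac{1}{\lambda_j}\langle R(X_i,X_j)X_j,X_i\rangle$, exactly as you do. The one place where you are more careful than the text is the distinguished index $i_0$ with $\lambda_{i_0}=(1+\|X\|)^2$; your observation that $\bar\nabla X=0$ forces $K(X_i,X_{i_0})=\bar K(X_i,X_{i_0})=0$ is the same fact the paper only records in the remark following Proposition \ref{25}.
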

\begin{proof}
A direct computation together with \eqref{22}, concludes the proof.
\end{proof}
\begin{cor} \label{23}
With the assumptions of Proposition \ref{11}, if $F$ (or equivalently $\tilde{F}$) is of Berwald type, then the Levi-Civita connection of ${\textsf{h}}$ and the Levi-Civita connection of $g_X$ coincide.
\end{cor}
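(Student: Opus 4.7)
The plan is to observe that Berwald type is stronger than Douglas type, so Proposition \ref{proposition nabla} applies and we get an explicit formula for the difference between the two Levi-Civita connections. I will then invoke the second defining equation of the Berwald condition recalled in the proof of Proposition \ref{11} to kill the correction term.

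More concretely, first I would note that if $F$ is of Berwald type, then in particular $\langle X, [y,z]\rangle = 0$ for all $y, z \in \mathfrak{g}$, so $F$ is of Douglas type. Hence the hypotheses of Proposition \ref{proposition nabla} are satisfied, and for the basis $\{X_1, \dots, X_n\}$ of eigenvectors of $\phi$ we have
\begin{equation*}
\nabla_{X_i}X_j = \bar{\nabla}_{X_i}X_j + \frac{1}{2\|X\|(1+\|X\|)}\bigl(\langle [X_i,X], X_j\rangle + \langle [X_j,X], X_i\rangle\bigr) X.
\end{equation*}

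Next, I would recall the Berwald characterization used in the proof of Proposition \ref{11}(2): $F$ is Berwald if and only if $\langle X, [y,z]\rangle = 0$ and $\langle [y,X], z\rangle + \langle [z,X], y\rangle = 0$ for all $y, z \in \mathfrak{g}$. Applying the second identity with $y = X_i$ and $z = X_j$ shows that the coefficient of $X$ in the displayed formula vanishes, giving $\nabla_{X_i}X_j = \bar{\nabla}_{X_i}X_j$ for all $i, j$. Since the two connections agree on a basis of left-invariant vector fields, they agree as bilinear operators on $\mathfrak{g}$, and by left-invariance they coincide globally on $G$. A direct computation finishes the proof.

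There is essentially no obstacle here; the statement is a near-immediate corollary of Proposition \ref{proposition nabla} once the extra Berwald identity is substituted. The only thing to be careful about is to explicitly justify the reduction to Douglas type before quoting Proposition \ref{proposition nabla}, which is why I spell that out first.
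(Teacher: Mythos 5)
Your proposal is correct and is exactly the argument the paper intends (the corollary is stated without a written proof, as an immediate consequence of Proposition \ref{proposition nabla}): reduce Berwald to Douglas, invoke the connection formula, and use the second Berwald identity $\langle [y,X],z\rangle+\langle [z,X],y\rangle=0$ with $y=X_i$, $z=X_j$ to kill the correction term along $X$. Nothing further is needed.
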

\begin{prop}\label{25}
Under the same assumptions as in Corollary \ref{23}, the sectional curvature of the Riemannian metric $g_X$ is given by
	\begin{equation}
		K(X_i,X_j)=\frac{1}{1+\|X\|}\bar{K}(X_i,X_j).
	\end{equation}
\end{prop}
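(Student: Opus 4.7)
The plan is to invoke Corollary \ref{23}, which under the Berwald hypothesis forces $\nabla = \bar{\nabla}$; consequently, the Riemann $(1,3)$-curvature tensors of $g_X$ and ${\textsf{h}}$ coincide, call this common tensor $R$. I would then unfold the sectional curvature of $g_X$ in the $\phi$-eigenbasis $\{X_1,\dots,X_n\}$ of Section 2 and compare with $\bar{K}$.

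Concretely, I would start from
\[
K(X_i, X_j) = \frac{g_X(R(X_i, X_j) X_j, X_i)}{g_X(X_i, X_i)\, g_X(X_j, X_j) - g_X(X_i, X_j)^2}.
\]
Using \eqref{1} together with $\phi(X_k) = \lambda_k X_k$ and the $\langle , \rangle$-orthonormality of the basis, one has $g_X(X_i, X_j) = \lambda_j\delta_{ij}$, so the denominator collapses to $\lambda_i\lambda_j$. For the numerator, $g_X(v, X_i) = \lambda_i\langle v, X_i\rangle$ together with $R = \bar{R}$ gives $\lambda_i \langle \bar{R}(X_i, X_j) X_j, X_i\rangle = \lambda_i\bar{K}(X_i, X_j)$. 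Hence
\[
K(X_i, X_j) = \frac{\bar{K}(X_i, X_j)}{\lambda_j}.
\]

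I would then split into cases using the eigenvalue description established at the beginning of Section 2: there is at most one distinguished index $i_0$ with $\lambda_{i_0} = (1+\|X\|)^2$ (corresponding to $X_{i_0} = X/\|X\|$), while $\lambda_k = 1+\|X\|$ for every $k\neq i_0$. If $j\neq i_0$, the claim $K(X_i,X_j) = \bar{K}(X_i,X_j)/(1+\|X\|)$ is immediate. If $j = i_0$ (so necessarily $i\neq i_0$ and $\lambda_i\neq\lambda_j$), the intrinsic symmetry $K(X_i,X_j) = K(X_j,X_i)$ produces the same identity with $i$ and $j$ interchanged; the two expressions $\bar{K}/\lambda_j$ and $\bar{K}/\lambda_i$ agree only if $\bar{K}(X_i,X_j) = 0$, so the asserted equality still holds, trivially on both sides.

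The main obstacle I foresee is exactly this edge case $j = i_0$, where a face-value reading of $K = \bar{K}/\lambda_j$ would produce the wrong denominator $(1+\|X\|)^2$. Resolving it requires the observation that the sectional curvature vanishes on every plane spanned by $X/\|X\|$ and an $\langle , \rangle$-orthogonal eigenvector of $\phi$; this vanishing is forced by the pair-symmetry of $R$ (holding with respect to both $g_X$ and $\langle , \rangle$ since their Levi-Civita connections coincide) together with the distinct eigenvalues of $\phi$ on such a plane.
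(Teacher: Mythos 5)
Your proof is correct and follows essentially the same route as the paper: Corollary \ref{23} gives $\nabla=\bar{\nabla}$, hence $R=\bar{R}$, and evaluating the sectional curvature in the $\phi$-eigenbasis yields $K(X_i,X_j)=\bar{K}(X_i,X_j)/\lambda_j$. The only difference is that you explicitly justify the edge case $j=i_0$ (where $\lambda_{i_0}=(1+\|X\|)^2$) via the pair symmetry of the curvature tensor with respect to both metrics, whereas the paper merely asserts $K(X_{i_0},X_j)=\bar{K}(X_{i_0},X_j)=0$ in an unproved remark placed after the proposition; your argument for that vanishing is a worthwhile addition.
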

\begin{proof}
Using Corollary \eqref{23}, we have $R(X_i,X_j)X_j=\bar{R}(X_i,X_j)X_j$, where $\bar{R}$ denotes the curvature tensor of the Riemannian metric ${\textsf{h}}$. Now it is sufficient to recall that, by equation \eqref{1}, we have:
	\begin{equation}\label{24}
		K(X_i,X_j)= \frac{1}{\lambda_j}\langle R(X_i,X_j)X_j,X_i\rangle.
	\end{equation}
\end{proof}
%%%%%%%%%%%%%%%%%%%%%%%%%%%%%%%%%%%%%%%%%%%%%%%%
%%%%%%%%%%%%%%%%%%%%%%%%%%%%%%%%%%%%%%%%%%%%%%%%
%%%%%%%%%%%%%%%%%%%%%%%%%%%%%%%%%%%%%%%%%%%%%%%%
Here we mention that $K(X_{i_0},X_j)=\bar{K}(X_{i_0},X_j)=0$.
\begin{prop}
With the same assumptions as in Proposition \ref{proposition nabla}, the sectional curvature of the Riemannian metric $g_X$ is given by
 \begin{eqnarray*}
 % \nonumber to remove numbering (before each equation)
   K(X_i,X_j)&=&\frac{1}{1+\|X\|}\Big\{\bar{K}(X_i,X_j)+\frac{1}{\|X\|(1+\|X\|)}\big(\langle [X_i,X],X_i\rangle\langle[X_j,X],X_j\rangle \\
   && -\frac{1}{4}(\langle[X_i,X],X_j\rangle+\langle[X_j,X],X_i\rangle)^2\big)\Big\}.
 \end{eqnarray*}
\end{prop}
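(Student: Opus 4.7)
The approach is to feed the connection-difference formula of Proposition \ref{proposition nabla} into the Riemannian curvature $R(X_i,X_j)X_j$ and then exploit the choice of basis (in which $\langle X, X_i\rangle = 0$ for $i \neq i_0$) to collapse almost all correction terms.

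Setting $c(Y,Z) := \frac{1}{2\|X\|(1+\|X\|)}\bigl(\langle [Y,X],Z\rangle + \langle [Z,X],Y\rangle\bigr)$, Proposition \ref{proposition nabla} reads $\nabla_Y Z = \bar{\nabla}_Y Z + c(Y,Z)X$ for left-invariant $Y, Z$. First I would expand each of $\nabla_{X_i}\nabla_{X_j}X_j$, $\nabla_{X_j}\nabla_{X_i}X_j$, and $\nabla_{[X_i,X_j]}X_j$ by applying this relation twice. With $c_{ab} := c(X_a, X_b)$, a direct collection of the resulting terms organizes as
$$R(X_i,X_j)X_j \;=\; \bar{R}(X_i,X_j)X_j \;+\; c_{jj}\bar{\nabla}_{X_i}X \;-\; c_{ij}\bar{\nabla}_{X_j}X \;+\; \Lambda\,X,$$
with $\Lambda$ an explicit scalar built from $c$ and $\bar{\nabla}$-derivatives. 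The degenerate case $i=i_0$ or $j=i_0$ is trivial ($[X_{i_0},X]=0$ kills every correction and, by the paper's observation, both $K$ and $\bar{K}$ vanish in that slot), so I assume $i,j\neq i_0$; the basis choice then gives $\langle X,X_i\rangle=0$, which eliminates the $\Lambda X$ term on pairing with $X_i$.

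Next I would apply $\langle\cdot,X_i\rangle$ and compute the two Koszul pairings via the specialization of Proposition \ref{29} to $\textsf{h}$ (all eigenvalues equal to one), using $X=\|X\|X_{i_0}$ and the Douglas condition $\langle X,[\mathfrak{g},\mathfrak{g}]\rangle=0$ (equivalently $\alpha_{ab\,i_0}=0$ for all $a,b$, from the proof of Proposition \ref{11}(1)). A short calculation then yields
$$\langle\bar{\nabla}_{X_i}X,X_i\rangle \;=\; \langle [X_i,X],X_i\rangle, \qquad \langle\bar{\nabla}_{X_j}X,X_i\rangle \;=\; \tfrac{1}{2}\bigl(\langle [X_i,X],X_j\rangle+\langle [X_j,X],X_i\rangle\bigr).$$
Substituting these, together with the explicit expressions $c_{jj}=\langle [X_j,X],X_j\rangle/(\|X\|(1+\|X\|))$ and $c_{ij}=\frac{1}{2\|X\|(1+\|X\|)}(\langle [X_i,X],X_j\rangle+\langle [X_j,X],X_i\rangle)$, and finally dividing by $\lambda_j=1+\|X\|$ via \eqref{5}, produces the claimed identity.

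The main obstacle is the bookkeeping in the expansion of $R$: each of the three $\nabla$-compositions splits into four pieces (the outer $\nabla$ acts on both a $\bar{\nabla}$-part and an $X$-part, and each of these splits again), for roughly ten scalar-multiple-of-$X$ terms that together make up $\Lambda$, and one must verify that none of them leaks outside the span of $X$. This becomes transparent once one observes that $\nabla_Y$ acts on any left-invariant vector field $W$ as $\bar{\nabla}_Y W + c(Y,W)X$, so the $X$-direction never rotates under further differentiation — it only accumulates new scalar coefficients, which $\langle\cdot,X_i\rangle$ then annihilates once $i\neq i_0$.
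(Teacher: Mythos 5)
Your proposal takes essentially the same route as the paper: substitute the connection difference $\nabla_YZ=\bar{\nabla}_YZ+c(Y,Z)X$ from Proposition \ref{proposition nabla} into $R(X_i,X_j)X_j$, observe that the corrections stay in the span of $X$ except for $c_{jj}\bar{\nabla}_{X_i}X-c_{ij}\bar{\nabla}_{X_j}X$, and pair with $X_i$; your decomposition is exactly the paper's displayed formula for $R(X_i,X_j)X_k$ specialized to $k=j$, and your evaluations $\langle\bar{\nabla}_{X_i}X,X_i\rangle=\langle[X_i,X],X_i\rangle$ and $\langle\bar{\nabla}_{X_j}X,X_i\rangle=\tfrac12(\langle[X_i,X],X_j\rangle+\langle[X_j,X],X_i\rangle)$ (the latter using the Douglas condition) reproduce the two correction terms correctly, so the argument is complete for $i,j\neq i_0$. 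The one genuine misstep is your dismissal of the case $i=i_0$ or $j=i_0$: the observation $K(X_{i_0},X_j)=\bar{K}(X_{i_0},X_j)=0$ that you cite belongs to the Berwald setting (where $X$ is parallel for ${\textsf{h}}$), and it fails for a mere Douglas metric. In the paper's almost Abelian example one has $\bar{K}(b,u_j)=-1$ and $K(b,u_j)=-1/(1+\xi)^2$, whereas the stated formula would give $-1/(1+\xi)$ since all bracket corrections vanish there; the discrepancy comes precisely from the term $\langle\bar{R}(X_i,X_j)X_j,X\rangle\langle X,X_i\rangle$ that survives in the paper's intermediate computation and from $\lambda_{i_0}=(1+\|X\|)^2\neq 1+\|X\|$. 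So that case is not trivially true --- the proposition's formula simply does not cover it, and it must be read with the restriction $i,j\neq i_0$ (a restriction the paper itself leaves implicit). With that correction your proof agrees with the paper's.
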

\begin{proof}
We know the curvature tensor of the Riemannian metric $g_X$ is as follows:
\begin{equation}
		R(X_i,X_j)X_k=\nabla_{X_i}\nabla_{X_j}X_k-\nabla_{X_j}\nabla_{X_i}X_k-\nabla_{[X_i,X_j]}X_k.
\end{equation}
Now, using Proposition \ref{proposition nabla} we have:
\begin{align*}
		R(X_i,X_j)X_k&= \bar{R}(X_i,X_j)X_k+
		\frac{1}{2\|X\|(1+\|X\|)}\Big\{ \Big(\langle[X_i,X],\bar{\nabla}_{X_j}X_k\rangle\\
		&+\langle[\bar{\nabla}_{X_j}X_k,X],X_i\rangle- \langle[X_j,X],\bar{\nabla}_{X_i}X_k\rangle - \langle[\bar{\nabla}_{X_i}X_k,X],X_j\rangle-\langle[[X_i,X_j],X],X_k\rangle\\
		&-\langle[X_k,X],[X_i,X_j]\rangle \Big)X +\Big(\langle[X_j,X],X_k\rangle+\langle[X_k,X],X_j\rangle\Big)\bar{\nabla}_{X_i}X\\
		&-\Big(\langle[X_i,X],X_k\rangle+\langle[X_k,X],X_i\rangle\Big)\bar{\nabla}_{X_j}X\Big\},
\end{align*}
where $\bar{R}$ denotes the curvature tensor of the Riemannian metric ${\textsf{h}}$. We also know that the sectional curvature of the Riemannian metric $g_X$ is as follows:
	\begin{equation}
		K(X_i,X_j)=\frac{1}{\lambda_j}\langle R(X_i,X_j)X_j,X_i\rangle
	\end{equation}
Then with a direct computation, we have:
\begin{align*}
	 K(X_i,X_j)=\frac{1}{\lambda_j}\Big\{\bar{K}(X_i,X_j)-&\frac{1}{\|X\|(1+\|X\|)}\Big(\langle\bar{R}(X_i,X_j)X_j,X\rangle \langle X,X_i\rangle - \langle[X_j,X],X_j\rangle \langle[X_i,X],X_i\rangle\Big)\\ &-\frac{1}{4\|X\|(1+\|X\|)} \Big(\langle[X_i,X],X_j\rangle+\langle[X_j,X],X_i\rangle\Big)^2 \Big\}
\end{align*}
\end{proof}

%%%%%%%%%%%%%%%%%%%%%%%%%%%%%%%%%%%%%%%%%%%%%%%%
%%%%%%%%%%%%%%%%%%%%%%%%%%%%%%%%%%%%%%%%%%%%%%%%
%%%%%%%%%%%%%%%%%%%%%%%%%%%%%%%%%%%%%%%%%%%%%%%%

In the following, we compute the flag curvature of the Randers metric $\tilde{F}$, in terms of the flag curvature of the Randers metric $F$, where they are of Berwald type.
\begin{prop}
Assume that $G$ is a Lie group with a left-invariant Berwaldian Randers metric $F$ arising from a left-invariant Riemannian metric ${\textsf{h}}$ and a left-invariant vector field $X$. Then the flag curvature $K^{\tilde{F}}(X_j,P)$ of $\tilde{F}$ is given by
\begin{equation}
	K^{\tilde{F}}(X_j,P)=\frac{(1+\langle X,X_j\rangle)^2}{\lambda_j(1+\lambda_{j}^\frac{1}{2}\langle X,X_j\rangle)^2}K^F(X_j,P),
\end{equation}
where $P=span\{X_i,X_j\}$ and $K^F(X_i,X_j)$ denotes the flag curvature of $F$.
\end{prop}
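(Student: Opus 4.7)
The plan is to exploit the Berwald hypothesis to produce drastic simplifications at every stage. First, since $F$ is Berwaldian Randers, the vector field $X$ is parallel with respect to ${\textsf{h}}$ (this follows from the characterization recalled in the proof of Proposition~\ref{11} together with a direct Koszul computation on the Lie algebra), and by Corollary~\ref{23} the Levi-Civita connections of ${\textsf{h}}$ and $g_X$ coincide; therefore $X$ is also parallel with respect to $g_X$, the Riemann curvature tensors of the two Riemannian metrics agree, and I denote their common $(1,3)$-tensor by $\bar R$. Parallelism of $X$ forces $\bar R(\cdot,\cdot)X=0$; combined with the usual symmetries of a Riemann curvature tensor this gives the two vanishing identities
\[
\langle \bar R(X_i,X_j)X_j, X\rangle=0 \qquad \text{and} \qquad \langle \bar R(X_i,X_j)X_j, X_j\rangle=0,
\]
which will be the key mechanism that collapses the flag curvature formulas.

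Second, I use that a Berwaldian Randers metric has the same Riemann curvature tensor as its underlying Riemannian metric (the two sprays coincide). Setting $W:=\bar R(X_i,X_j)X_j$, the flag curvatures take the purely Riemannian shape
\[
K^F(X_j,P)=\frac{g_{X_j}(W,X_i)}{g_{X_j}(X_j,X_j)\,g_{X_j}(X_i,X_i)-g_{X_j}(X_j,X_i)^2},
\]
and an analogous expression for $K^{\tilde F}(X_j,P)$ using the fundamental tensor $\tilde g_{X_j}$ of $\tilde F$ in place of $g_{X_j}$.

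Third, I substitute the explicit Randers fundamental-tensor formula
\[
g_y(u,v)=\Bigl(\tfrac{\alpha(y,u)}{\alpha(y)}+\beta(u)\Bigr)\Bigl(\tfrac{\alpha(y,v)}{\alpha(y)}+\beta(v)\Bigr)+\tfrac{(\alpha+\beta)(y)}{\alpha(y)}\Bigl(\alpha(u,v)-\tfrac{\alpha(y,u)\,\alpha(y,v)}{\alpha(y)^2}\Bigr),
\]
first with $(\alpha,\beta)=({\textsf{h}},{\textsf{h}}(X,\cdot))$ to produce $g_{X_j}$, and then with $(\alpha,\beta)=(\sqrt{g_X(\cdot,\cdot)}, g_X(X,\cdot))$ to produce $\tilde g_{X_j}$, in each case evaluated at $y=X_j$. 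The $g_X$-orthogonality $g_X(X_i,X_j)=\lambda_j\delta_{ij}$ together with the two vanishing identities from Step~1 annihilate every term in which $W$ is paired with $X$ or $X_j$, so that after cancellation of a common factor the flag curvatures reduce to the remarkably clean identities
\[
K^F(X_j,P)=\frac{\bar K(X_i,X_j)}{F(X_j)^2}=\frac{\bar K(X_i,X_j)}{(1+\langle X,X_j\rangle)^2},
\]
\[
K^{\tilde F}(X_j,P)=\frac{\bar K(X_i,X_j)}{\tilde F(X_j)^2}=\frac{\bar K(X_i,X_j)}{\lambda_j\bigl(1+\sqrt{\lambda_j}\,\langle X,X_j\rangle\bigr)^2},
\]
and dividing the second by the first gives precisely the claimed formula.

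The main technical obstacle is the bookkeeping in Step~3: expanding $g_{X_j}(W,X_i)$ and $\tilde g_{X_j}(W,X_i)$ via the fundamental-tensor formula produces several terms, and the argument succeeds only because each such term is killed either by one of the two vanishing identities from Step~1 or by the eigenvector orthogonality of the basis $\{X_k\}$. The ``degenerate'' case $X_j\parallel X$ (i.e.\ $j=i_0$) is handled automatically: there $W=0$ by parallelism, both flag curvatures vanish, and the stated identity holds trivially.
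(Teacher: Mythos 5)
Your proof is correct, and it reaches the same two intermediate identities the paper relies on, namely $K^F(X_j,P)=\bar K(X_i,X_j)/F^2(X_j)$ and $K^{\tilde F}(X_j,P)=\bar K(X_i,X_j)/\tilde F^2(X_j)$, before dividing. The difference is in how you get there: the paper simply quotes Proposition 3.1 of \cite{Deng-Hosseini-Liu-Salimi} (which states $K^F(y,P)=\frac{{\textsf{h}}(y,y)}{F^2(y)}\bar K(P)$ for Berwaldian Randers metrics), applies it to both $F$ and $\tilde F$, and uses Proposition \ref{25} to convert the sectional curvature of $g_X$ into that of ${\textsf{h}}$ via the factor $1/\lambda_j$; you instead rederive that cited formula from first principles, using the parallelism of $X$ (equivalent to the Berwald condition via the Koszul formula), the resulting identities $\langle\bar R(X_i,X_j)X_j,X\rangle=\langle\bar R(X_i,X_j)X_j,X_j\rangle=0$, the coincidence of the sprays of a Berwald Randers metric with those of its underlying Riemannian metric, and a direct expansion of the Randers fundamental tensor at $y=X_j$. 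I checked the bookkeeping in your Step 3: the terms do collapse as claimed (for $\tilde F$ one needs $g_X(X_j,W)=\lambda_j\langle X_j,W\rangle$ and $g_X(X,W)=(1+\|X\|)^2\langle X,W\rangle$, so the vanishing identities transfer through $\phi$), and the denominators reduce to $F^3(X_j)$ and $\tilde F^3(X_j)\lambda_i/\sqrt{\lambda_j}$ respectively, giving exactly the stated quotients. What your route buys is self-containment and an explicit verification of the black-boxed lemma, plus a clean treatment of the degenerate flag $X_j\parallel X$; what the paper's route buys is brevity. One small point worth making explicit in your write-up: the applicability of the quoted flag-curvature formula to $\tilde F$ requires knowing $\tilde F$ is Berwald, which you get either from Proposition \ref{11} or from your observation that $X$ is $g_X$-parallel because the two Levi-Civita connections coincide (Corollary \ref{23}).
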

\begin{proof}
	By Proposition \ref{11}, $\tilde{F}$ is of Berwald type. According to the Proposition 3.1 in \cite{Deng-Hosseini-Liu-Salimi} , the flag curvatures $K^F$ and $K^{\tilde{F}}$ of $F$ and $\tilde{F}$, respectively, are as follows:
	\begin{equation}\label{28}
		K^{\tilde{F}}(X_j,P)=\frac{g_X(X_j,X_j)}{\tilde{F}^2(X_j)}K(X_i,X_j),
	\end{equation}
	\begin{equation}\label{27}
		K^F(X_j,P)=\frac{{\textsf{h}}(X_j,X_j)}{F^2(X_j)}\bar{K}(X_i,X_j),
	\end{equation}
where, as above, $K$ is the sectional curvature of the Riemannian metric $g_X$ and $\bar{K}$ is the sectional curvature of the Riemannian metric ${\textsf{h}}$.\\
Using the Proposition \ref{25}, we have
	\begin{equation}\label{26}
		K(X_j,X_j)=\frac{1}{\lambda_{j}}\bar{K}(X_j,X_j).
	\end{equation}
On the other hand, we have
	\begin{equation*}
		{\textsf{h}}(X_j,X_j)=1,\qquad F^2(X_j)=(1+\langle X,X_j\rangle)^2,
	\end{equation*}
	and
	\begin{equation*}
		 g_X(X_j,X_j)=\lambda_{j},\qquad \tilde{F}^2(X_j)=\lambda_{j}(1+\lambda_{j}^\frac{1}{2}\langle X,X_j\rangle)^2.
	\end{equation*}
Now, by \eqref{26}, \eqref{27} and \eqref{28}, we conclude that \eqref{27} holds.	
\end{proof}

%%-----------------------------Examples-----------------------------

\section{\textbf{Examples}}
\subsection{The Heisenberg group $H_3$}
Let $H_3$ be the simply connected 3-dimensional Heisenberg Lie group, and $\langle, \rangle$ be the inner product on its Lie algebra ${\mathfrak{h}}_3$, induced by the left-invariant Riemannian metric ${\textsf{h}}$ on $H_3$ introduced in \cite{salimi2}. Suppose that  $\{x,y,z\}$ is an orthogonal basis of ${\mathfrak{h}}_3$ and
$$\left(\begin{array}{ccc}
	\lambda &0&0\\
	0&\lambda&0\\
	0&0&1
\end{array}\right)$$
is the matrix of the inner product $\langle , \rangle$ with respect to this basis, where $\lambda>0$. \\
Suppose that $X$ is a left-invariant vector field belonging to the center of the Lie algebra $\mathfrak{h}_3$. So $X$ is a scalar multiple of $z$. Let $X=cz$ and  $\|X\|=|c|<\frac{1}{2}$. We can see that the linear mapping  $\phi:\mathfrak{h}_3\to \mathfrak{h}_3$ is as follows:
\begin{equation}\label{30}
\phi(v)=(1+\vert c \vert)(v+\vert c \vert\langle z,v \rangle z).
\end{equation}
Easily the matrix representation of $\phi$ with respect to the basis $\{x,y,z\}$ is of the form
$$\left(\begin{array}{ccc}
	1+\vert c \vert &0&0\\
	0&1+\vert c \vert&0\\
	0&0&(1+\vert c \vert)^2
\end{array}\right).$$
Therefore, its eigenvalues are $\lambda_1=1+\vert c \vert$, $\lambda_2=1+\vert c \vert$, $\lambda_3=(1+\vert c \vert)^2$ with the corresponding eigenvectors $X_1=\frac{x}{\sqrt{\lambda}}$, $X_2=\frac{y}{\sqrt{\lambda}}$ and $X_3=z$, which construct an orthonormal basis for $\mathfrak{h}_3$. So we have
\begin{align}
	[X_1,X_2]&=\frac{1}{\lambda}z\\
	[X_1,X_3]&=[X_2,X_3]=0.
\end{align}
Therefore the non-zero structure constants are $\alpha_{123}=-\alpha_{213}=\frac{1}{\lambda}$.\\
Now according to Proposition \ref{29} the Levi-Civita connection of the Riemannian metric $g_X$ is as follows:
\begin{align*}
\nabla_x y&=\frac{1}{2}z,\quad \nabla_x z=-\frac{1+\vert c \vert}{2\lambda}y,\\
 \nabla_y z&=\frac{1+\vert c \vert}{2\lambda}x, \quad \nabla_y x=-\frac{1}{2}z, \\
 \nabla_z x&=-\frac{1+\vert c \vert}{2\lambda}y, \quad \nabla_z y=\frac{1+\vert c \vert}{2\lambda}x,\\
 \nabla_x x&=\nabla_y y = \nabla_z z=0.
\end{align*}
Due to the \eqref{7} and \eqref{30} we have
\begin{align*}
	g_{X}(x,x)&=g_{X}(y,y)=\lambda(1+\vert c \vert),\quad g_{X}(z,z)=(1+\vert c \vert)^2,\\
	g_{X}(x,y)&=g_{X}(x,z)=g_{X}(z,y)=0.
\end{align*}
Therefore, according to Proposition \ref{31} the sectional curvature of the Riemannian metric $g_X$ is as follow
\begin{align*}
K(x,y)&=\frac{-3}{4\lambda^2},\\
K(x,z&)=K(y,z)=\frac{1}{4\lambda^2}.
\end{align*}
\subsection{The almost Abelian Lie groups}
A non-Abelian Lie group $G$ is called almost Abelian if it has
a codimension one Abelian subgroup. Suppose that $G$ is an almost Abelian Lie group with Lie algebra $\mathfrak{g}$. Let $\langle, \rangle$ be a left-invariant Riemannian metric on $G$. Then there exist $b\in\mathfrak{g}$ and a commutative ideal $\mathfrak{u}$ of codimention one such that $[b,z]=z$, for any $z\in\mathfrak{u}$ (see \cite{Milnor} and \cite{Deng-Hosseini-Liu-Salimi}). Suppose that $\{u_1=b,u_2,u_3,\cdots,u_n\}$ is an orthonormal basis for $\mathfrak{g}$ with respect to $\langle, \rangle$,
where $\{u_2,u_3,\cdots,u_n\}$ is a basis for $\mathfrak{u}$. In this case we have, $[b,u_i]=u_i,\quad i=2,\cdots,n$ and $[u_i,u_j]=0,\quad i,j\neq1$. Therefore, the non-zero structural constants are $\alpha_{1ii}=-\alpha_{i1i}=1,\quad i=2,3,\cdots,n$. Let $X=\xi b$, where in $\|X\|=\xi<\frac{1}{2}$. Then the linear mapping  $\phi:\mathfrak{g}\to \mathfrak{g}$ is as follows:
\begin{equation}
	\phi(v)=(1+\xi)v+(1+\xi)\xi\langle b,v\rangle b.
\end{equation}
Then, the matrix representation of $\phi$ with respect to the ordered basis $\{u_1=b,u_2,u_3,\cdots,u_n\}$ is as follows
$$\left(\begin{array}{cccccc}
	(1+\xi)^2 &0&0&\ldots&\ldots&0\\
	0&(1+\xi)&0&\ldots&\ldots&0\\
	\vdots & 0&\ddots&\ldots&\ldots&0\\
	\vdots&\vdots&\vdots&\ddots&\ldots&\vdots\\
	\vdots&\vdots&\vdots&\vdots&\ddots&0\\
	0&0&\ldots&\ldots&0&(1+\xi).
\end{array}\right).$$
Hence, the eigenvalues of $\phi$ are $\lambda_1=(1+\xi)^2$, $\lambda_2=\cdots=\lambda_n=(1+\xi)$ and the corresponding eigenvectors are $\{b,u_2,\cdots,u_n\}$.\\
Now according to the Proposition \ref{29} the Levi-Civita connection of the Riemannian metric $g_X$ is as follows:
\begin{align*}
	&\nabla_{u_j}b=-u_j,\quad j\neq1,\\
	&\nabla_{u_j}u_j=\frac{1}{1+\xi}b,\quad j\neq1,\\
	&\nabla_{u_i}u_j=0, i\neq j \quad and\quad i,j=2,3,\cdots,n ,\\
	&\nabla_{b}b=0\\
	&\nabla_{b}u_j=0,\quad j=2,3,\cdots,n.
\end{align*}
Therefore, according to the Proposition \ref{31} the sectional curvature of the Riemannian metric $g_X$ is as follows:
\begin{equation*}
	K(u_i,u_j)=-\frac{1}{(1+\xi)^2}.
\end{equation*}
\begin{remark}
Finally, we mention that using formula (2.8) of \cite{Parhizkar-salimi}, the results of this paper can be generalized in an almost similar way of Randers metrics to the other $(\alpha,\beta)$-metrics.
\end{remark}

{\large{\textbf{Acknowledgment.}}} We are grateful to the office of Graduate Studies of the University of Isfahan for their support.
\section*{Declarations}

{\textbf{Ethical Approval:} Not applicable.\\

{\textbf{Conflict of interest:} On behalf of all authors, the corresponding author states that there is no conflict of interest.\\

{\textbf{Authors contributions:}} Not applicable.\\

{\textbf{Funding:}} There is not any financial support. \\

{\textbf{Availability of data and materials:}} Data sharing does not apply to this article as no datasets were generated or analyzed during the current study.\\

\end{document}